\newcommand{\ZZ}{\mathbb{Z}}
\newcommand{\CC}{\mathbb{C}}
\newcommand{\Qp}{{\mathbb{Q}_p}}
\newcommand{\FF}{\mathbb{F}}
\newcommand{\LLC}{{\mathrm{LLC}}}
\newcommand{\Fq}{{\mathbb{F}_q}}
\newcommand{\cA}{\mathcal{A}}
\newcommand{\cC}{\mathcal{C}}
\newcommand{\cD}{\mathcal{D}}
\newcommand{\cH}{\mathcal{H}}
\newcommand{\Fr}{\mathrm{Frob}}
\newcommand{\reg}{\mathrm{reg}}
\newcommand{\Sh}{\mathrm{Sh}}
\newcommand{\rI}{\mathrm{I}}
\newcommand{\sw}{\mathrm{sw}}
\newcommand{\cE}{\mathcal{E}}
\newcommand{\cM}{\mathcal{M}}
\newcommand{\cT}{\mathcal{T}}
\newcommand{\fP}{\mathfrak{P}}
\newcommand{\End}{\operatorname{End}}
\newcommand{\Ind}{\operatorname{Ind}}
\newcommand{\Mod}{\operatorname{Mod}}
\newcommand{\bG}{\mathbf{G}}
\newcommand{\bL}{\mathbf{L}}
\newcommand{\bM}{\mathbf{M}}
\newcommand{\bT}{\mathbf{T}}
\newcommand{\diag}{\operatorname{diag}}
\newcommand{\tame}{\operatorname{t}}
\newcommand{\unip}{\mathrm{u}}
\newcommand{\Tr}{\operatorname{Tr}}
\newcommand{\Hom}{\operatorname{Hom}}
\newcommand{\Cent}{\operatorname{C}}
\newcommand{\fR}{\mathfrak{R}}
\newcommand{\fB}{\mathfrak{B}}
\newcommand{\Irr}{\operatorname{Irr}}
\newcommand{\spe}{\operatorname{sp}}
\newcommand{\fo}{\mathfrak{o}}
\newcommand{\fp}{\mathfrak{p}}
\newcommand{\ft}{\mathfrak{t}}
\newcommand{\fX}{{\mathfrak{X}}}
\newcommand{\abel}{{\mathrm{ab}}}
\newcommand{\rN}{{\mathrm{N}}}
\newcommand{\kbar}{{\overline{k}}}
\newcommand{\Gal}{\operatorname{Gal}}
\newcommand{\GL}{\mathrm{GL}}
\newcommand{\cusp}{\mathrm{c}}
\def\tr{\mathrm{tr}}
\title[]{On the Macdonald correspondence}
\author{Anne-Marie Aubert}
\address{Sorbonne Universit\'e and Universit\'e Paris Cit\'e, CNRS,
IMJ-PRG, F-75005 Paris, France}
\email{anne-marie.aubert@imj-prg.fr}
\date{\today}
\numberwithin{equation}{subsection}
\newtheorem{prop}[equation]{Proposition}
\newtheorem{thm}{Theorem}
\newtheorem{cor}[equation]{Corollary}
\newtheorem*{thm*}{Theorem}
\newtheorem*{thmconverse}{Converse Theorem}
\theoremstyle{definition}
\newtheorem{defn}[equation]{Definition}
\newtheorem{remark}[equation]{Remark}
\def\spe{{\mathrm{sp}}}
\def\WD{{\mathrm{WD}}}
\begin{document}

\maketitle

 \centerline{\sl To the memory of Ian G. Macdonald}

\begin{abstract}  In 1980  Ian G.  Macdonald established an explicit bijection between the isomorphism classes of the irreducible representations of $\GL_n(k)$,  where $k$ is a finite field, and inertia equivalence classes of
admissible tamely ramified $n$-dimensional Weil-Deligne representations of $W_F$, where $F$ is a non-archimedean local field with residue field $k$ and $W_F$ the absolute Weil group of $F$.   

We describe a construction of the Macdonald correspondence based on the specialization to $\GL_n(k)$ of Lusztig's classification of irreducible representations of finite groups of Lie type, and review some 
properties of the correspondence. We define $\epsilon$-factors for pairs of irreducible cuspidal representations of finite general linear groups, and show that they match with the expected Deligne $\epsilon$-factors under the 
Macdonald correspondence. We use these $\epsilon$-factors for pairs  to obtain a characterization of  the Macdonald correspondence for the  irreducible cuspidal representations.
\end{abstract}

\tableofcontents

\section{Introduction}
Let $n$ be a positive integer and let $\GL_n(k)$  be the general linear group of $n$ by $n$ matrices over a finite field $k$. In \cite{Gr} Green determined all irreducible characters of  $\GL_n(k)$ 
 by parabolic induction, generalizing the Frobenius theory of the symmetric groups. The Green theory has greatly influenced the developments of representation theory and algebraic combinatorics. 
 Deligne and Lusztig \cite{DL} generalized the Green parabolic induction to construct linear representations of finite groups of Lie types using $\ell$-adic cohomology  and later 
 Lusztig found all representations of finite  groups of Lie type \cite{Lus}.
 
In algebraic combinatorics, the Green theory showed the importance of  a new family of symmetric functions-Hall-Littlewood functions. 
Macdonald has written the famous book \cite{Mac2} to account the Green identification of the Hall algebra with the ring of symmetric functions. 

The (non-archimedean) local Langlands correspondence was established by Laumon, Rapoport, and  Stuhler \cite{LRS} for function fields, and by  Harris-Taylor \cite{HT}, Henniart \cite{He-2000} and
 Scholze \cite{Sc},  independently, for finite extensions of $\Qp$, with $p$ a prime number. 
Long before this,  in 1980, by using the Green construction, Macdonald 
constructed in \cite{Mac} a correspondence between the isomorphism classes of the irreducible representations of $\GL_n(k)$, and inertia equivalence classes of
admissible tamely ramified $n$-dimensional Weil-Deligne representations of $W_F$, where $F$ is a non-archimedean local field with residue field $k$ and $W_F$ the absolute Weil group of $F$.  
Macdonald  defined also in \cite{Mac} a version over finite fields of the Godement-Jacquet $\epsilon$-factors and proved that his  correspondence matches them with the $\epsilon$-factors  that were 
defined by Deligne in \cite{Del}. 

Silberger and Zink established  in \cite[(A.1)]{SiZi2} the compatibility of the Macdonald correspondence with the local Langlands correspondence for the depth-zero representations of $\GL_n(F)$.

On the other hand, in the $p$-adic situation, the pioneering work \cite{JPSS} of Jacquet, Piatetski-Shapiro and Shalika defined the notion of Rankin-Selberg local factors, also called \textit{local factors for pairs}, for 
pairs irreducible generic representations of $\GL_n(F)$, including $\gamma$-factors for pairs. In 1993, Henniart established in \cite{He-1993} a converse theorem based on these latter, and, as consequence, obtained a 
characterization of the local Langlands correspondence.

In the finite situation, $\gamma$-factors for pairs were defined by Roditty-Gershon in \cite{Rod} and  by Nien in \cite[Theorem~2.10]{Nie-2014}, but only when at least one the representation in the pair is cuspidal. 
Recently in \cite{SoZe}, Soudry and Zelingher provided  a general definition of $\gamma$-factors for pairs of irreducible generic representations of finite general linear groups, by using a finite version  of the Langlands-Shahidi method, 
and established that these  $\gamma$-factors for pairs  are multiplicative.

We first describe a construction of the Macdonald correspondence based on the specialization to $\GL_n(k)$ of Lusztig's classification of irreducible representations of finite groups of Lie type, and review 
properties of the correspondence. Next, by using  \cite{SoZe}, we define $\epsilon$-factors for pairs of irreducible cuspidal representations  of finite general linear groups, and show that they match with the expected Deligne $\epsilon$-factors under the 
Macdonald correspondence. Thanks to a converse theorem established by Nien in \cite{Nie-2014}, following Henniart's approach, we obtain  in Theorem~\ref{thm:cuspLC} a characterization of the Macdonald correspondence for the irreducible cuspidal representations.

\section{Construction of the Macdonald correspondence} \label{se:const}

\subsection{The set \texorpdfstring{$\fP(\fX)^\Gamma$}{\fP(\fX)^\Gamma}}  \label{sec:fP}
Let $p$ be a fixed prime number and let $k=\Fq$ be a finite  field with $q$ elements, where $q$ is a power of $p$. 
Let $\kbar$ be an algebraic closure of $k$, and let $\Fr\colon x\mapsto x^q$  denote the Frobenius automorphism of $\kbar$ over $k$.
For each integer $n\ge 1$, the set $k_n$ of fixed points of $\Fr^n$ in $\kbar$ is the unique extension of $k$ of degree $n$ contained in $\kbar$. 
We denote by $\Gamma_n$ the Galois group $\Gal(k_n/k)$ and by $\Gamma$ the Galois group $\Gal(\kbar/k)$.  We have $\Gamma\simeq\widehat\ZZ:=\displaystyle\lim_{\longleftarrow} \ZZ/n\ZZ$.

Let $\kbar^\times$ be the multiplicative group of $\kbar$, and $k_n^\times$ the  multiplicative group of $k_n$, so that $k_n^\times=(\kbar^\times)^{\Fr^n}$.
If $m$ divides $n$, the norm map $\rN_{n,m}\colon k_n^\times\to k_m^\times$, defined by
\begin{equation}
\rN_{n,m}(x):=x^{(q^n-1)/(q^m-1)}=\prod_{i=0}^{d-1}\Fr^{mi}x,
\end{equation}
where $d:=n/m$, is a surjective homomorphism. The groups $k_n^\times$ and the homomorphisms $\rN_{n,m}$ form an inverse system. 

Their inverse limit $\displaystyle\lim_{\longleftarrow} k_n^\times$ is a profinite group. The character group of $\fX$ of the latter is therefore a discrete group. Whenever $m$ divides $n$, the character group $\fX_m$ of $k^\times_m$  is embedded in the character group $\fX_n$ of $k^\times_n$  by the transpose of the norm homomorphism $\rN_{n,m}$,
 and we have
\begin{equation} \label{eqn:fX}
\fX=\lim_{\longrightarrow} \fX_n.
\end{equation}
Every element in $\fX$ has finite order, and the group $\fX$ is (non-canonically) isomorphic to $\kbar^\times$. 

A character of $k_n^\times$ is said to be \textit{regular} if it is fixed by no nontrivial element of $\Gamma_n$.  We denote by $\fX_n^\reg$ the set of regular characters of $k_n^\times$. 
Thus, we have $\theta\in\fX_n$ if and only if the cardinality of the set $\left\{\theta,\theta^q,\ldots,\theta^{q^{n-1}}\right\}$ is $n$.

Let  $\fP(n)$ denote the set of partitions of $n$, and $\fP:=\bigcup_{n\in\ZZ_{\ge 0}}\fP(n)$. For a partition $\lambda\in\fP$, we have $\lambda\in\fP(n)$ for some integer $n$; we define $|\lambda|:=n$.  We consider the set
\begin{equation} \label{eqn:fP}
\fP(\fX):=\left\{\underline{\lambda}\colon\fX\to\fP\,:\, \theta\mapsto \lambda_\theta:=\underline{\lambda}(\theta)\right\}
\end{equation}
of partition-valued functions $\underline{\lambda}$ on $\fX$ such that $\underline{\lambda}(\theta)=(0)$ except for finitely many $\theta\in\fX$. 
Let $\fP_n(\fX)$ denote the set  of partition-valued functions $\underline{\lambda}\colon\fX\to\fP$ such that $\sum_{\theta\in \fX}|\lambda_\theta|=n$.

The group $\Gamma$ acts naturally on $P(\fX)$. 
Let $\fP(\fX)^\Gamma$ be the subset consisting of all $\Gamma$-invariant functions on $\fP(\fX)$. Since $\underline{\lambda}\in\fP(\fX)^\Gamma$ is constant on any Galois orbit $[\theta]:=\Gamma\cdot\theta$ (for $\theta\in\fX$), we may view the elements of 
$\fP(\fX)^\Gamma$ as partition-valued functions defined on the set of Galois orbits $\Gamma\backslash\fX$. We write $\fP_n(\fX)^\Gamma:=\fP(\fX)^\Gamma\cap \fP_n(\fX)$.

\subsection{Weil-Deligne representations}  \label{sec:WD}
Let $\WD_F:=W_F\ltimes\CC$ be the Weil-Deligne group of $F$. The semidirect product is defined via the action
\begin{equation}
wxw^{-1}=|\!|w|\!| x,\quad\text{for $w\in W_F$ and $x\in \CC$,}
\end{equation}
where $|\!|w|\!|$ is the norm of $w\in W_F$. Let $I_F$ and $P_F$ denote the inertia and the wild inertia  subgroups of $W_F$ respectively. These subgroups are normal in $\WD_F$.
We denote by $W_F^{\tame}:=W_F/P_F$ and $I_F^{\tame}:=I_F/P_F$ the \text{tame Weil group} and the \text{tame inertia subgroup} of $F$, respectively.  We have
\begin{equation} \label{eqn:I/P}
I_F^{\tame}\cong \lim_{\longleftarrow} k_n^\times. 
\end{equation}
It follows that the character group of  $I_F^{\tame}$ is canonically isomorphic to $\fX$. 

A \textit{Weil-Deligne} representation of $W_F$ is  a pair $(\rho,N)$, where $\rho$ is a finite-dimensional representation of $W_F$ acting  on a $\CC$-vector space $V_\rho$, such that $\ker\rho$ contains an open subgroup of $I_F$ and $\rho(w)$ is semisimple for any $w\in W_F$, and $N$ is  a nilpotent endomorphism of $V_\rho$ satisfying
\begin{equation} \label{eqn:relation}
\rho(w)N\rho(w)^{-1}=|\!|w|\!|N\quad\text{for any $w\in W_F$.}
\end{equation}
The representation $(\rho,N)$ is said to be:
\begin{itemize}
\item \textit{unramified} if $I_F \subset \ker \rho$, 
\item \textit{tamely ramified} if $P_F \subset \ker(\rho)$. 
\end{itemize}

The irreducible Weil-Deligne representations $(\rho,N)$ are those such that $\rho$ is an irreducible representation of $W_F$ and $N=0$. We denote by $\Irr(W_F^{\tame}$ the set of irreducible tamely ramified representations of $W_F$.

For every integer $m\ge 1$,  the \textit{special representation} of dimension $m$ is the representation $\spe_F(m):=(\rho_m,N_m)$, where 
\begin{equation} \label{eqn:special}
\rho_m(w):=\diag(|\!|w|\!|^{(1-m)/2},|\!|w|\!|^{(3-m)/2},\ldots,|\!|w|\!|^{(m-3)/2},|\!|w|\!|^{(m-1)/2}),
\end{equation}
and $N_m$ is the nilpotent $m\times m$-matrix with $1$s on the subdiagonal and $0$s elsewhere. The pair $(\rho_m,N_m)$ clearly satisfy \eqref{eqn:relation}. 

The indecomposable  Weil-Deligne representations are those of the form
\begin{equation} \label{eqn:indec}
\spe_F(m)\otimes\rho=(\diag(N_m,\ldots,N_m),\rho_m\otimes \rho),
\end{equation}
where $m\ge 1$ and  $\rho$ is an irreducible representation of $W_F$.

If $\lambda=(n_1\ge n_2\ge\cdots  \ge n_r>0)$ is a nonzero partition of $n$, we set
\begin{equation}
\spe_F(\lambda):=\spe_F(n_1)\oplus\cdots\oplus\spe_F(n_r).
\end{equation}
\begin{defn} \label{defn:I-equivalence} {\rm \cite[\S3]{Mac}}
Two Weil-Deligne representations  are \textit{inertially equivalent} if their restriction to $I_F\times\CC\subset \WD_F$ are equivalent.
\end{defn}
Let  $\Irr(W_F^{\tame})_{I_F}$ denote the set of inertial equivalence classes of rreducible tamely ramified representations of $W_F$.
If $\theta\in\fX$, let $\rho(\theta)$ be an irreducible representation of $W_F^{\tame}$ such that $\theta$ occurs in the restriction of $\rho(\theta)$ to $I_F$. We denote by $\rho(\theta)_{I_F}$ the inertial class of $\rho(\theta)$. The we have a bijective map
\begin{eqnarray} \label{eqn:bij}
\Gamma\backslash \fX&\longrightarrow&\Irr(W_F^{\tame})_{I_F}\cr
\theta&\mapsto&\rho(\theta)_{I_F}.
\end{eqnarray}
Let $\fR(\WD_F^{\tame})_{I_F}$ denote the set of inertial equivalence classes of  tamely ramified representations of $\WD_F$, and
let $\fR(\WD_F^{\tame})_{I_F,n}$ be the subset of inertial equivalence classes of  $n$-dimensional tamely ramified representations of $\WD_F$.
From \eqref{eqna:bij} we obtain a bijection
\begin{eqnarray} \label{eqn:bij-total}
\fP(\fX)^\Gamma&\longrightarrow&\fR(\WD_F^{\tame})_{I_F}\cr
\underline{\lambda}&\mapsto&\bigoplus_{[\theta]\in\Gamma\backslash \fX}\spe_F(\lambda_\theta)\otimes\rho(\theta)_{I_F},
\end{eqnarray}
which restricts to a bijection
\begin{equation} \label{eqn:bij-total-n}
\fP_n(\fX)^\Gamma\longrightarrow\fR(\WD_F^{\tame})_{I_F,n}.
\end{equation}

\subsection{Representations of \texorpdfstring{$\GL_n(k)$}{\GL_n(k)}} \label{sec:GLnk}
Let  $F$ be a non-archimedean local field with residue field $k$. We denote by $\fR_\CC(\GL_n(k))$  the category of representations of $\GL_n(k))$ with complex coefficients, 
and by $\fR_\CC(\GL_n(F))$ the category of \textit{smooth} (i.e., such that the stabilizer of any vector is open) representations of $\GL_n(F))$ with complex coefficients. 
Let $\FF$ be either $k$  or $F$. We denote by $\Irr_\CC(\GL_n(\FF))$ the irreducible objects in $\fR_\CC(\GL_n(\FF))$.

\subsubsection{The Harish-Chandra theory}
Given a sequence of positive integers $n_1$, $\ldots$, $n_r$ such that $n_1+\cdots+n_r=n$, we define $P(n_1,\ldots,n_r)$  to be the parabolic subgroup of $\GL_n(\FF)$ with Levi decomposition 
\begin{equation}
P(n_1,\ldots,n_r)=L(n_1,\ldots,n_r)\rtimes  U(n_1,\ldots,n_r),
\end{equation}
where 
\begin{equation}
L(n_1,\ldots,n_r):=\left\{\diag(g_1,\ldots,g_r)\,:\,g_i\in\GL_{n_i}(k), \text{for $1\le i\le r$}\right\},
\end{equation}
\begin{equation}
U(n_1,\ldots,n_r):=\left\{\left(\begin{matrix} 
\rI_{n_1}&\ast&\ast&\ast\cr
&\rI_{n_2}&\ast&\ast\cr
&&\ddots&\ast\cr
&&&\rI_{n_r}
\end{matrix} \right)\right\}.
\end{equation}
We set $U_n:=U(1,\ldots,1)$.

Given representations $\pi_1$, $\ldots$, $\pi_r$ of $\GL_{n_1}(\FF)$, $\ldots$, $\GL_{n_r}(\FF)$, respectively, we write $\sigma:=\pi_1\otimes\cdots\otimes\pi_r$, and denote by $\widetilde{\sigma}$ its inflation to $P$. We define
the parabolically induced representation
\begin{equation} \label{eqn:rI}
\rI(\pi_1,\ldots,\pi_r):=\Ind_{P(n_1,\ldots,n_r)}^{\GL_n(\FF)}(\widetilde{\sigma}).
\end{equation}
We write
\begin{equation} \label{eqn:irred}
\pi_1\odot\cdots\odot\pi_r:=\rI(\pi_1\otimes\cdots\otimes\pi_h).
\end{equation}
The operation $\odot$ is commutative and associative. 

A representation in $\Irr_\CC(\GL_n(\FF))$ is  said to be \textit{cuspidal} if it is not isomorphic to a quotient of any parabolically induced representation from a proper Levi subgroup of $\GL_n(k))$.
We denote by $\fB(\GL_n(k))$ the set of  $\GL_n(k)$-conjugacy classes of pairs $(L,\sigma)$, where $L$ is a Levi subgroup of $\GL_n(k)$ and $\sigma$ an irreducible cuspidal representation of $L$.
The category $\fR_\CC(\GL_n(k))$ decomposes as a direct product of full subcategories as follows:
\begin{equation}
\fR_\CC(\GL_n(k))=\prod_{\ft\in\fB(\GL_n(k))}\fR^\ft_\CC(\GL_n(k)),
\end{equation}
where a representation $\pi$ is an object of $\fR^\ft_\CC(\GL_n(k))$ if and only if every irreducible subquotient of $\pi$ occurs in $R_{\bL'}^{\GL_n}(\sigma')$, with $\bL'$ is a  $k$-rational Levi subgroup of a $k$-rational parabolic subgroup of 
$\GL_n(\kbar)$ and $\sigma'$ an irreducible cuspidal representation of $L':=\bL'(k)$ such that $(L',\sigma')\in\ft$.

In particular, we obtain the following partition of the set $\Irr_\CC(\GL_n(k))$
\begin{equation}
\Irr_\CC(\GL_n(k))=\bigsqcup_{\ft\in\fB(\GL_n(k))}\Irr^\ft_{\CC}(\GL_n(k)), 
\end{equation}
where $\Irr^\ft_{\CC}(\GL_n(k))$ is the set of irreducible objects of $\fR^\ft_\CC(\GL_n(k))$.

Let $\ft=(L,\sigma)\in\fB(\GL_n(k))$. Let $\bL$ be the $k$-rational Levi subgroup of $\GL_n(\kbar)$ with $L$ as its group of $k$-rational points. We define 
\begin{equation}
\cH(\GL_n(k),\sigma):=\End_{\CC\GL_n(k)}(R_\bL^{\GL_n}(\sigma)),
\end{equation}
where $R_\bL^{\GL_n}(\sigma)$ denote the parabolically induced representation of $\sigma$.  The parabolic (or Harish-Chandra) induction is a special case of the Lusztig induction $R_\bM^\bG$, where $\bM$ is  $k$-rational Levi subgroup of an arbitrary (that is, non necessarily $k$-rational) parabolic subgroup of a reductive 
connected algebraic group $\bG$ defined over $k$. 

There is an equivalence of categories
\begin{equation}
\fR_\CC^\ft(\GL_n(k))\,\longrightarrow\,\Mod-\cH(\GL_n(k),\sigma).
\end{equation}
The algebra $\cH(\GL_n(k),\sigma)$ is an Hecke-Iwahori algebra with Weyl group 
\begin{equation}
W(L,\sigma):=\rN_{\GL_n(k)}(L,\sigma)/L
\end{equation}
(see for instance \cite[Chap.~8]{Lus}).
By \eqref{eqn:equiv-cat}, we have a bijection between the simple modules of $\cH(\GL_n(k),\sigma)$ and the set $\Irr_\CC^\ft(\GL_n(k))$ of all the irreducible objects of $\fR_\CC^\ft(\GL_n(k))$,  and  hence  a bijection between isomorphism classes of irreducble  representations of $W(L,\sigma)$ and $\Irr_\CC^\ft(\GL_n(k))$
\begin{eqnarray} \label{eqn:piE}
\Irr(W(L,\sigma))&\to &\Irr_\CC^\ft(\GL_n(k))\cr
E&\mapsto&\pi_E.
\end{eqnarray}

The Weyl group $\rN_{\GL_n(k)}(L,\sigma)/L$ can be described as follows.
We first consider the case where $L\simeq\GL_m(k)^{\times e}$ with $me=n$ (that is, $L$ is $\GL_n(k)$-conjugate to $L(m,\ldots,m)$) and $\sigma=\sigma_0\otimes\cdots\otimes\sigma_0=:\sigma_0^{\otimes e}$,
where $\sigma_0$ is an irreducible cuspidal representation of $\GL_m(k)$. 
Then $W(L,\sigma)$ is the symmetric  group $S_e$.

In the general case,  we have
\begin{equation} \label{eqn:Levi}
L\simeq\GL_{m_1}(k)^{\times e_1}\times\cdots\times\GL_{m_h}(k)^{\times e_h}\quad\text{and}\quad
\sigma=\sigma_1^{\otimes e_1}\otimes\cdots\otimes\sigma_h^{\otimes e_h},
\end{equation}
where $\sigma_i\in\Irr_\CC(\GL_{m_i}(k))$ is cuspidal for every $i\in\{1,\ldots,h\}$ and the representations $\sigma_i$ and $\sigma_j$ are not isomorphic for any $i\ne j$.
We have 
\begin{equation} \label{eqn:W}
W(L,\sigma)=S_{e_1}\times\cdots\times S_{e_h}.
\end{equation}
The isomorphism classes of irreducible representations of $W(L,\sigma)$ are hence pa\-ra\-me\-tri\-zed by $h$-tuples $(\lambda_{e_1},\ldots,\lambda_{e_h})$, 
where $\lambda_{e_i}$ is a partition of $e_i$, for each $i\in\{1,\ldots,h\}$. We will denote by $E(\lambda_{e_1},\ldots,\lambda_{e_h})\in\Irr(W(L,\sigma))$
the representation parametrized by $(\lambda_{e_1},\ldots,\lambda_{e_h})$.

\subsubsection{Parametrization of the representations} 
Let $\Irr_\cusp(\GL_n(k))$ denote the set of isomorphism classes of irreducible cuspidal representations of $\GL_n(k)$.
Green's results \cite[p.431]{Gr} show the existence of a bijection between $\Gamma$-orbits $[\theta]$ of regular characters $\theta$ of 
$k_n^\times$ (i.e., $\Gamma$-orbits of degree $n$) and isomorphism classes of irreducible cuspidal representations $\pi_{[\theta]}$ of $\GL_n(k)$, unique up to isomorphism, such that
\begin{equation}
\tr(\pi_{[\theta]})=(-1)^{n-1}\sum_{\gamma\in\Gamma_n}\theta({}^{\gamma }x),
\end{equation}
for any $x\in k_n^\times$ of degree $n$ over $k$, where $k_n^\times$ is considered as a maximal torus in $\GL_n(\kbar)$.
The representation $\pi_{[\theta]}$ is up to a sign a Deligne-Lusztig character $R_\bT^{\GL_n(\theta)}$, where $\bT$ is an elliptic maximal torus of $\GL_n(\kbar)$ that is isomorphic to $k_n^\times$.

Let $L$ be a Levi subgroup of $\GL_n(k)$ and $\sigma$ an irreducible cuspidal representation of $L$. We keep the notation of \eqref{eqn:Levi}.
By applying the above bijection to the irreducible cuspidal representations $\sigma_1$, $\ldots$, $\sigma_h$,   we have for every $i\in\{1,\ldots,h\}$,
\begin{equation} \label{eqn:thetai}
\sigma_i=\pi_{[\theta_i]} \quad \text{where $\theta_i$ is a regular character of $k_{m_i}^\times$}.
\end{equation}
By combining \eqref{eqn:thetai} with the bijection \eqref{eqn:piE}, we attach to  each  $(\theta,\lambda)$, where $\theta:=(\theta_1,\ldots,\theta_h)\in\fX_{m_1}^\reg\times\cdots\times\fX_{m_h}^\reg$ and $\lambda:=(\lambda_{e_1},\ldots,\lambda_{e_h})$, with $\lambda_i$ a partition of $e_i$, and 
$e_1m_1+\cdots+e_hm_h=n$, the representation
\begin{equation} \label{eqn:piHCL}
\pi_{\theta,\lambda}:=\pi_{E(\lambda_{e_1},\ldots,\lambda_{e_h})}\in\Irr^\ft(\GL_n(k)),
\end{equation}
where $\ft=(L,\sigma)$ with 
\begin{equation}
L\simeq\GL_{m_1}(k)^{\times e_1}\times\cdots\times\GL_{m_h}(k)^{\times e_h}\quad\text{and}\quad 
\sigma=\pi_{[\theta_1]}^{e_1} \otimes \cdots\otimes\pi_{[\theta_h]}^{e_h}.
\end{equation}

\subsubsection{The Lusztig classification}
We keep the notation of the previous sections. In particuler, $\bL$ is a $k$-rational Levi subgroup of a $k$-rational parabolic subgroup of $\GL_n(\kbar)$.
We recall that, by \cite[Corollary~7.7]{DL}, for every irreducible representation $\pi$ of $\GL_n(k)$, there exists a $k$-rational maximal torus $\bT$ of $\GL_n(\kbar)$ and a character $\theta$ of $T:=\bT(k)$ such that the character of $\pi$ (still denoted by $\pi$) occurs in the Deligne-Lusztig (virtual) character $R_\bT^{\GL_n}(\theta)$, i.e.~such that
\begin{equation} 
\langle\pi, R_\bT^{\GL_n}(\theta)\rangle_{\GL_n(k)}\ne 0,
\end{equation} where $\langle\;,\;\rangle_{\GL_n(k)}$ is the usual scalar 
product on the space of class functions on $\GL_n(k)$: 
\begin{equation} \label{eqn:scalar product}
\langle f_1,f_2\rangle_{\GL_n(k)} = |\GL_n(k)|^{-1}\sum_{g\in\GL_n(k)}f_1(g)\,\overline{f_2(g)} .
\end{equation}
If $\theta=1$ (i.e.~the trivial character of $T$), then the representation $\pi$ is called \textit{unipotent}. 

The $\GL_n(k)$-conjugacy classes of pairs $(\bT,\theta)$ as above are in one-to-one correspondence with the $\GL_n(k)$-conjugacy classes of pairs $(\bT^\vee,s)$, where $s$ is a semisimple element of $L$, 
and $\bT$ is a $k$-rational maximal torus of $\GL_n(\kbar)$ containing $s$ (for a general reductive algebraic group $\bG$ defined over $k$, we need to consider a semisimple element in a maximal torus $\bT^\ast$ of the group $\bG^\ast$, where $\bG^\ast$ is the group defined over $k$, with root datum dual to that of $\bG$; for $\bG=\GL_n$, we have $\bG^\ast=\bG$ and $\bT^\ast=\bT$).
Then the \textit{Lusztig series} $\mathcal{E}(\GL_n,s)$ is defined as the set of the representations $\pi\in\Irr(\GL_n(k))$ such that $\pi$ occurs in $R_\bT^{\GL_n}(\theta),$ where $(\bT,\theta)_{\GL_n(k)}$ corresponds to $(\bT,s)_{\GL_n(k)}$.
By definition, $\mathcal{E}(\GL_n,1)$ consists only of unipotent representations. 

By \cite[\S10]{DL} and \cite[(8.4.4)]{Lus}, the set $\Irr_\CC(\GL_n(k))$ decomposes into a disjoint union:
\begin{equation} \label{eqn:Lusztig-series}
\Irr_\CC(\GL_n(k))=\bigsqcup_{(s)}\mathcal{E}(\GL_n,s),
\end{equation}
where $(s)$ is the $\GL_n(k)$-conjugacy class of a semisimple element $s$ of $\GL_n(k)$.

By  \cite[Theorem~4.23]{Lus}, there is a bijection
\begin{equation}\label{Lusztig-unipotent-decomposition}
 J_{s}^{\GL_{n}}\colon  \mathcal{E}(\GL_n(k),s)\xrightarrow{1-1} \mathcal{E}(\Cent_{\GL_n(\kbar)}(s)(k),1), \quad
    \pi\mapsto\pi^\unip.
\end{equation}
Moreover, the bijection $J_{s}^{\GL_n}$ can be chosen so that the following diagram is commutative (see for instance \cite[Corollary~4.7.6]{GeckMalle}):
\begin{equation} \label{eqn:commutes}
\begin{tikzcd}
\ZZ\cE(L,s) \arrow[]{d}[swap]{R_\bL^{\GL_n}}\arrow[]{r}{J_{s}^{L}}&\ZZ\cE(\Cent_{\bL}(s)(k),1)\arrow[]{d}{R_{\Cent_{\bL}(s)}^{\Cent_{\GL_n}(s)}}\\
\ZZ\cE(\GL_n(k),s) \arrow[]{r}[swap]{J_{s}^{\GL_{n}}} & \ZZ\cE(\Cent_{\GL_n(\kbar)(k)}(s),1)
\end{tikzcd}.
\end{equation}
Let $s$ be a semisimple element in $\GL_n(k)$ such that the representation 
$\pi:=\pi_{\theta,\lambda}$ defined in \eqref{eqn:piHCL} belongs to the Lusztig series $\cE(\GL_n(k),s)$. We have $s=s_1\cdots s_h$, where, for each $i\in\{1\ldots,h\}$,  the element $s_i$
a semisimple element in $\GL_{e_im_i}(\kbar)$ such that
 \begin{equation} \label{eqn:centra}
 \Cent_{\GL_{e_im_i}}(s_i)=\GL_{e_i}(k_{m_i}).
 \end{equation}. 
Hence we obtain
\begin{equation}
\Cent_{\GL_n(\kbar)}(s)\simeq \GL_{e_1}(k_{m_1})\times\cdots\times\GL_{e_h}(k_{m_h}).
\end{equation}
 The unipotent representation $\pi^\unip$ of $\Cent_{\GL_n(\kbar)}(s)(k)$ decomposes as
 \begin{equation}
 \pi^\unip=\pi^\unip_1\otimes\cdots\otimes\pi^\unip_h,\quad\text{where $\pi_i^\unip\in \cE(\GL_{e_i}(k_{m_i}),1)$ for every $i\in\{1\ldots,h\}$.}
 \end{equation}
For every $i\in\{1,\ldots,h\}$, let $\pi_i$ be the irreducible representation of $\GL_{e_im_i}(k)$ in the Lusztig series $\cE(\GL_{e_im_i}(k),s_i)$ of the group $\GL_{e_im_i}(k)$ such that  
\begin{equation} \label{eqn:pi}
J_{s_i}^{\GL_{e_im_i}}(\pi_i)=\pi^\unip_i.
\end{equation}
Let $E(\lambda_{e_i})$ be the irreducible representation of $S_{e_i}$ which is parametrized by the partition $\lambda_i$ of $e_i$. We have
\begin{equation} \label{eqn:pii}
\pi_i=\pi_{E(\lambda_{e_i})}=\pi_{[\theta_i],\lambda_i}.
\end{equation}

\begin{remark} \label{rem:Mac}
{\rm  The representation $\pi_{[\theta_i],\lambda_i}$ is denoted by $(f_i^{\lambda_i})$, where $(f_i)=[\theta_i]$, in  \cite[\S1]{Mac}.}
\end{remark}

\begin{prop} \label{prop:irrred}
Let $M$ be a Levi subgroup of $\GL_n(k)$ such that
\begin{equation}
M\simeq\GL_{e_1m_1}(k)\times\cdots\times\GL_{e_hm_h}(k).
\end{equation}
Then the parabolically induced representation $R_M^{\GL_n}(\pi_1\otimes\cdots\otimes\pi_h)$ is irreducible.

\end{prop}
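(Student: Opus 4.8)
The plan is to analyze the parabolic induction $R_M^{\GL_n}(\pi_1 \otimes \cdots \otimes \pi_h)$ via the Lusztig Jordan decomposition, reducing irreducibility over $\GL_n(k)$ to irreducibility of an induced unipotent representation over the centralizer of $s$. First I would observe that each $\pi_i$ lies in the Lusztig series $\cE(\GL_{e_im_i}(k), s_i)$, and that the semisimple elements $s_i$ have \emph{disjoint} eigenvalue data: the eigenvalues of $s_i$ lie in the $\Gamma$-orbits associated to $\theta_i$, and since the $\sigma_i$ (equivalently the $[\theta_i]$) are pairwise non-isomorphic, the element $s = s_1 \cdots s_h$ in $\GL_n(k)$ has centralizer that factors as the product $\prod_i \Cent_{\GL_{e_im_i}(\kbar)}(s_i) \simeq \prod_i \GL_{e_i}(k_{m_i})$, with no further merging. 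Consequently $R_M^{\GL_n}(\pi_1\otimes\cdots\otimes\pi_h)$ lies entirely inside the single Lusztig series $\cE(\GL_n(k), s)$, so I may transport the question through the Jordan decomposition bijection $J_s^{\GL_n}$.

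The key step is to invoke the commutativity of diagram \eqref{eqn:commutes} (with $\bL$ replaced by $\bM$, which is a $k$-rational Levi): it gives
\begin{equation}
J_s^{\GL_n}\bigl(R_M^{\GL_n}(\pi_1\otimes\cdots\otimes\pi_h)\bigr) = R_{\Cent_{\bM}(s)}^{\Cent_{\GL_n}(s)}\bigl(\pi_1^\unip\otimes\cdots\otimes\pi_h^\unip\bigr),
\end{equation}
where on the right $\Cent_{\GL_n(\kbar)}(s) \simeq \GL_{e_1}(k_{m_1})\times\cdots\times\GL_{e_h}(k_{m_h})$ and $\Cent_{\bM}(s)$ is exactly the diagonal Levi $\prod_i \GL_{e_i}(k_{m_i})$ inside it — that is, the right-hand side is \emph{Harish-Chandra induction within a product of general linear groups of a representation that is already irreducible on the whole group}. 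Since $J_s^{\GL_n}$ is a bijection of the underlying sets of irreducibles (and extends linearly to virtual characters), irreducibility of $R_M^{\GL_n}(\pi_1\otimes\cdots\otimes\pi_h)$ is equivalent to irreducibility of $\pi_1^\unip\otimes\cdots\otimes\pi_h^\unip$, which is immediate because it is an external tensor product of irreducible unipotent representations and the induction from $\Cent_{\bM}(s)$ to $\Cent_{\GL_n}(s)$ is the identity (the two groups coincide).

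A cleaner and essentially equivalent route, which I would use to double-check, is purely Hecke-algebraic: by \eqref{eqn:piE} the representations in $\Irr^\ft(\GL_n(k))$ for $\ft = (L,\sigma)$ with $L, \sigma$ as in \eqref{eqn:Levi} correspond to irreducible representations of $W(L,\sigma) = S_{e_1}\times\cdots\times S_{e_h}$, and $\pi_{\theta,\lambda}$ corresponds to $E(\lambda_{e_1})\boxtimes\cdots\boxtimes E(\lambda_{e_h})$. The group $M$ is an intermediate Levi $L \subseteq M \subseteq \GL_n$, and $R_M^{\GL_n}$ on $\fR^\ft$ corresponds, under the equivalence with $\Mod$-$\cH(\GL_n(k),\sigma)$, to the induction functor $\Ind_{W(M\cap L, \sigma)}^{W(L,\sigma)}$ between the corresponding Hecke-Iwahori algebras; but $W(M \cap L, \sigma) = W(L,\sigma)$ here since $\pi_i$ already accounts for the full block $S_{e_i}$ inside $\GL_{e_im_i}$, so the induction is along the identity and preserves irreducibility. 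I expect the \textbf{main obstacle} to be bookkeeping rather than conceptual: one must carefully check that the eigenvalue/character data of the $s_i$ genuinely stay disjoint so that the centralizer of $s$ factors without merging blocks (this uses that the $[\theta_i]$ are distinct $\Gamma$-orbits, hence the sets $\{\theta_i^{q^j}\}$ are disjoint as subsets of $\kbar^\times$), and that the relevant instance of diagram \eqref{eqn:commutes} applies to the possibly-larger Levi $\bM$ in place of $\bL$ — which it does, since \eqref{eqn:commutes} is stated for an arbitrary $k$-rational Levi of a $k$-rational parabolic, and $\bM$ qualifies.
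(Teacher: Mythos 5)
Your first argument is essentially identical to the paper's proof: both transport the induction $R_M^{\GL_n}$ through the Jordan decomposition bijection via the commutative diagram \eqref{eqn:commutes} (with $\bM$ in place of $\bL$), observe that the disjointness of the $[\theta_i]$ forces $\Cent_\bM(s)=\Cent_{\GL_n(\kbar)}(s)$, so that the induction on the unipotent side is the identity, and conclude irreducibility from bijectivity of $J_s^{\GL_n}$. Your secondary Hecke-algebra route via $W_M(L,\sigma)=W(L,\sigma)=S_{e_1}\times\cdots\times S_{e_h}$ is a correct and natural alternative, but the paper does not use it.
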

\begin{proof}
By combining \eqref{eqn:pi} and \eqref{eqn:commutes}, we obtain
\begin{equation} \label{eqn:compat}
R_M^{\GL_n}(\pi_1\otimes\cdots\otimes\pi_h)=(R_M^{\GL_n}\circ J_{s}^M)(\pi_1\otimes\cdots\otimes\pi_h)=(J_{s}^{\GL_n}\circ R_{\Cent_\bM(s)}^{\Cent_G(s)})( \pi^\unip).
\end{equation}
We have 
\[\Cent_\bM(s)=\Cent_{\GL_{e_1m_1}}(s_1)\times\cdots\times\Cent_{\GL_{e_hm_h}}(s_h),\]
and hence, it follows from \eqref{eqn:centra} that $\Cent_\bM(s)=\Cent_{\GL_n(\kbar)}(s)$. Thus, \eqref{eqn:compat} implies that $R_M^{\GL_n}(\pi_1\otimes\cdots\otimes\pi_h)$ is irreducible.
\end{proof}
\begin{cor} \label{cor:larep} We have
\begin{equation} 
\pi_{[\theta],\lambda_\theta}=\pi_{[\theta_1],\lambda_1}\odot\cdots\odot\pi_{[\theta_h],\lambda_h}.
\end{equation}
\end{cor}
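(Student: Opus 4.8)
The plan is to unwind the definitions of the two sides of the asserted equality and then apply Proposition~\ref{prop:irrred}. On the left-hand side, by the construction in \eqref{eqn:piHCL} the representation $\pi_{[\theta],\lambda_\theta}$ is the irreducible object $\pi_{E(\lambda_{e_1},\ldots,\lambda_{e_h})}\in\Irr^\ft(\GL_n(k))$ attached, via the Hecke-algebra parametrization \eqref{eqn:piE}, to the irreducible representation $E(\lambda_{e_1},\ldots,\lambda_{e_h})=E(\lambda_{e_1})\boxtimes\cdots\boxtimes E(\lambda_{e_h})$ of $W(L,\sigma)=S_{e_1}\times\cdots\times S_{e_h}$, for $\ft=(L,\sigma)$ as in \eqref{eqn:Levi}. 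On the right-hand side, each factor $\pi_{[\theta_i],\lambda_i}$ is by \eqref{eqn:pii} the irreducible representation $\pi_i\in\cE(\GL_{e_im_i}(k),s_i)$ characterized by \eqref{eqn:pi}, and $\odot$ is by definition \eqref{eqn:irred} the Harish-Chandra induction $R_M^{\GL_n}(\pi_1\otimes\cdots\otimes\pi_h)$ from the Levi $M\simeq\GL_{e_1m_1}(k)\times\cdots\times\GL_{e_hm_h}(k)$.

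First I would invoke Proposition~\ref{prop:irrred} to conclude that $R_M^{\GL_n}(\pi_1\otimes\cdots\otimes\pi_h)$ is irreducible, so the right-hand side is a single irreducible representation and it suffices to identify it. Next I would use the commutativity of diagram \eqref{eqn:commutes} in the form established inside the proof of Proposition~\ref{prop:irrred}: combining \eqref{eqn:pi} with \eqref{eqn:commutes} gives
\begin{equation}
R_M^{\GL_n}(\pi_1\otimes\cdots\otimes\pi_h)=J_s^{\GL_n}\bigl(R_{\Cent_\bM(s)}^{\Cent_{\GL_n}(s)}(\pi^\unip_1\otimes\cdots\otimes\pi^\unip_h)\bigr),
\end{equation}
and since $\Cent_\bM(s)=\Cent_{\GL_n(\kbar)}(s)$ the Lusztig induction on the right is trivial, so $R_M^{\GL_n}(\pi_1\otimes\cdots\otimes\pi_h)=J_s^{\GL_n}(\pi^\unip_1\otimes\cdots\otimes\pi^\unip_h)=J_s^{\GL_n}(\pi^\unip)$. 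Thus the right-hand side lies in $\cE(\GL_n(k),s)$ and corresponds under $J_s^{\GL_n}$ to the unipotent representation $\pi^\unip=\pi^\unip_1\otimes\cdots\otimes\pi^\unip_h$.

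It then remains to check that the left-hand side $\pi_{[\theta],\lambda_\theta}$ corresponds to the same unipotent representation under $J_s^{\GL_n}$; but this is exactly the content of the paragraph preceding Remark~\ref{rem:Mac}, where $s$ was chosen so that $\pi_{\theta,\lambda}\in\cE(\GL_n(k),s)$, $\pi^\unip=\pi^\unip_1\otimes\cdots\otimes\pi^\unip_h$ with $J_{s_i}^{\GL_{e_im_i}}(\pi_i)=\pi^\unip_i$, and $\pi_i=\pi_{[\theta_i],\lambda_i}$. Since $J_s^{\GL_n}$ is a bijection \eqref{Lusztig-unipotent-decomposition}, the equality of the images forces $\pi_{[\theta],\lambda_\theta}=R_M^{\GL_n}(\pi_1\otimes\cdots\otimes\pi_h)=\pi_{[\theta_1],\lambda_1}\odot\cdots\odot\pi_{[\theta_h],\lambda_h}$.

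The main obstacle, and the point deserving care, is the passage through diagram \eqref{eqn:commutes}: one must be sure that the Jordan decomposition bijections $J_s^L$, $J_s^{\GL_n}$ are normalized compatibly across the groups $L$, $M$, $\GL_n$ and the relevant centralizers (this is where the cited \cite[Corollary~4.7.6]{GeckMalle} and \cite[Corollary~4.7.3]{GeckMalle}-type transitivity of Harish-Chandra series enter), and that the Hecke-algebra parametrization $E\mapsto\pi_E$ of \eqref{eqn:piE} is matched, on the unipotent side, with the analogous parametrization for $\Cent_{\GL_n(\kbar)}(s)(k)\simeq\GL_{e_1}(k_{m_1})\times\cdots\times\GL_{e_h}(k_{m_h})$ so that $E(\lambda_{e_1},\ldots,\lambda_{e_h})$ indeed goes to $\pi^\unip_1\otimes\cdots\otimes\pi^\unip_h$. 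Once these normalizations are pinned down, the corollary is a formal consequence of Proposition~\ref{prop:irrred} and the bijectivity of $J_s^{\GL_n}$.
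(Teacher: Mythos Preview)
Your proposal is correct and follows essentially the same approach as the paper. The paper's proof simply states that Proposition~\ref{prop:irrred} implies $\pi_{[\theta],\lambda_\theta}=R_M^{\GL_n}(\pi_1\otimes\cdots\otimes\pi_h)$, leaving implicit precisely the argument you spell out: the computation \eqref{eqn:compat} inside that proposition, together with $\Cent_\bM(s)=\Cent_{\GL_n(\kbar)}(s)$, shows that $R_M^{\GL_n}(\pi_1\otimes\cdots\otimes\pi_h)$ and $\pi_{[\theta],\lambda_\theta}$ have the same image $\pi^\unip$ under the bijection $J_s^{\GL_n}$, hence coincide.
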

\begin{proof}
Proposition~\ref{prop:irrred} implies that
\begin{equation} 
\pi_{[\theta],\lambda_\theta}=R_M^{\GL_n}(\pi_1\otimes\cdots\otimes\pi_h).
\end{equation}
\end{proof}

\begin{remark} {\rm The combination of  Remark~\ref{rem:Mac} and Corollary~\ref{cor:larep}  shows that the representation $\pi_{[\theta],\lambda_\theta}$ coincides with the representation denoted by
$(f_1)^{\lambda_1}\circ\cdots\circ (f_m)^{\lambda_m}$, with $m=h$, in \cite[\S1]{Mac}.}
\end{remark}

By \eqref{eqn:piHCL}, we have obtained a bijection  
\begin{eqnarray} \label{eqn:Macdonald-bijection}
\fP_n(\fX)^\Gamma&\longrightarrow&\Irr(\GL_n(k))\cr
\underline{\lambda}&\mapsto &\bigodot_{[\theta]\in \Gamma\backslash \fX}\pi_{[\theta],\lambda_\theta}.
\end{eqnarray}
We write $\GL_0(k):=\{1\}$, and set
\begin{equation}
\Irr(\GL(k)):=\bigsqcup_{n\in \ZZ_{\ge 0}}\Irr(\GL_n(k)).
\end{equation}
Hence, we have a bijection  
\begin{eqnarray}
\fP(\fX)^\Gamma&\longrightarrow&\Irr(\GL(k))\cr
\underline{\lambda}&\mapsto &\bigodot_{[\theta]\in \Gamma\backslash \fX}\,\pi_{[\theta],\lambda_\theta}.
\end{eqnarray}

\subsection{Definition of the Macdonald correspondence}
The bijection obtained by composing the inverse of the bijection \eqref{eqn:Macdonald-bijection} with the bijection \eqref{eqn:bij-total-n} is the bijection defined by Macdonald in \cite[\S1]{Mac}:
\begin{equation} \label{eqn:MC}
\cM_n\colon\Irr(\GL_n(k))\overset{1-1}{\longrightarrow}\fR(\WD_F^{\tame})_{I_F,n}.
\end{equation}
We call $\cM_n$ the \textit{Macdonald correspondence}. 

We have
\begin{equation} \label{eqn:observation}
\omega_\pi=\det(\cM_n(\pi)),\quad \text{for every $\pi\in \Irr(\GL_n(k)$,}
\end{equation}
where $\omega_{\pi}$ denotes the central character of $\pi$, and the determinant character of $\cM_n(\pi)$ is considered as a function of  
$k^\times=\fo_F^\times/1+\fp_F\hookrightarrow (W^{\tame})^{\abel}$ (see \cite[(1.2)]{Mac}).

By construction, the map $\cM_n$ restricts to a bijection 
\begin{equation} \label{eqn:MC-cusp}
\cM_n^\cusp\colon\Irr_\cusp(\GL_n(k))\overset{1-1}{\longrightarrow}\Irr(\WD_F^{\tame})_{I_F,n}
\end{equation}
between the isomorphism classes of irreducible cuspidal representations of $\GL_n(k)$ and the inertial equivalence classes of $n$-dimensional irreducible tamely ramified representations of $\WD_F$.

\subsection{Definition and preservation of the $\epsilon$-factors}
Let $\psi\colon k\to \CC^\times$ be a nontrivial additive character.
Let $U_n$ denote the upper triangular unipotent subgroup of $\GL_n(k)$. We define a character $\psi_n\colon U_n\to\CC^\times$ by 
\begin{equation} 
\psi_n\left(\begin{smallmatrix}
1&a_1&\ast&\ldots&\ast\cr
&1&a_2&\ldots&\ast\cr
&&\ddots&\ddots&\vdots\cr
&&&1&a_{n-1}\cr
&&&&1
\end{smallmatrix}\right):=\psi(a_1+a_1+\cdots+a_{n-1}).
\end{equation}
A finite dimensional representation $\pi$ of $\GL_n(k)$ is said to be \textit{generic} if 
\begin{equation} \Hom_{U_n}(\pi|_{U_n},\psi_n)\ne 0.
\end{equation} 
The representation $\pi$ is generic if and only if there exists a nonzero vector $v\in V_\pi$ such that $\pi(u)v=\psi_n(v)$ for every $u\in U_n$. Such a vector $v$ is called a \textit{Whittaker vector} (with respect to $\psi_n$). 
We say that $\pi$  is \textit{of Whittaker type} if $\pi$ is generic and the subspace spanned by its Whittaker vectors is one-dimensional.

Let $V$ be $k$-vector space of dimension $n$. We set $A:=\End_k(V)$ and let $\cC(A)$ be the space of complex valued functions on $A$. 
Following \cite{Springer} or \cite{Mac2}, we introduce the notion of the Fourier transform and zeta function of complex representations of $G:=\GL(V)$ as follows. For $\phi\in\cC(A)$, the Fourier transform $\widehat\phi$ is defined by
\begin{equation}
\widehat\phi(a):=q^{-n^2/2}\sum_{a'\in A} \phi(a')\,\Tr(aa').
\end{equation}
We have $\widehat{\widehat\phi}(a)=\phi(a)$ for every $a\in A$. 

For a finite dimensional complex representation $\pi$ of $G$, and for $\phi\in\cC(A)$, we define the zeta function $Z(\pi,\psi)$ by
\begin{equation}
Z(\pi,\psi):=\sum_{g\in G} \phi(g)\pi(g).
\end{equation}
For every $a\in A$, we set
\begin{equation}
W(\pi,\psi;a):=q^{-n^2/2}\sum_{g\in G}\Tr(\pi(ga))\pi(a).
\end{equation}
We have 
\begin{equation}
Z(\pi,\psi)=\sum_{a\in A} \widehat\phi(-a)\cdot W(\pi,\psi;a).
\end{equation}
For $g'\in G$, we have
\[W(\pi,\psi;g'a)=W(\pi,\psi;a)\pi(g')^{-1}\quad\text{and}\quad W(\pi,\psi;g'a)=\pi(g')^{-1}W(\pi,\psi;a).\]
It implies that  $\pi(g')$ commutes with $W(\pi,\psi;1)$. Hence, if $\pi$ is irreducible, there exists $w(\pi,\psi)\in \CC$ such that
\begin{equation}
W(\pi,\psi;1)=w(\pi,\psi)\,\pi(1).
\end{equation}
The $\epsilon$-factor $\epsilon(\pi,\psi)$ is defined to be 
\begin{equation} \label{eqn:epsilon}
\epsilon(\pi,\psi):=w(\pi^\vee,\psi),
\end{equation}
where $\pi^\vee$ is the contragredient representation of  $\pi$.

Let $\Psi$ be nontrivial additive character of $F$. We suppose that $\fP_F\subset \ker\Psi$ and $\fo_F\not\subset\ker \Psi$. By restriction to $\fo_F$ and reduction to $\fp_F$, the character $\Psi$ induces an nontrivial 
additive character $\psi$ of $k$. We fix the Haar measure on $F$ normalized such that $\fp_F$ has volume $q^{-1/2}$.

Deligne has attached in \cite[(5.1) and \S8.12]{Del}  to every Weil-Deligne representation $\varphi:=(\rho,N)$ of $\WD_F$ a nonzero constant $\epsilon_0(\varphi,\Psi)$. When $\varphi$ is tamely ramified, this constant 
is computed in \cite[\S5.16]{Del}, and Macdonald has proved in \cite[\S3]{Mac} that it depends only on the restriction of $\rho$ to $I_F$. Moreover, Macdonald has established in \cite[\S3]{Mac} that the following equality holds
\begin{equation} \label{eqn=ee0}
\epsilon(\pi,\psi)=\epsilon_0(\cM_n(\pi),\Psi), \quad\text{for every $\pi\in\Irr(\GL_n(k))$.}
\end{equation}

\section{Gamma factors for pairs}
Let $n_1$ and $n_2$ be positive integers, and let $(\pi_i,V_{\pi_i})\in\Irr(\GL{n_i}(k))$ for $i\in\{1,2\}$. Let  $\sw_{\pi_1,\pi_2}\colon V_{\pi_1}\otimes V_{\pi_2}\to V_{\pi_2}\otimes V_{\pi_1}$ denote a linear map defined on pure tensors by 
\begin{equation} 
\sw_{\pi_1,\pi_2}(v_1\otimes v_2):=v_2\otimes v_1, \quad \text{for all $v_1\in V_{\pi_1}$ and $v_2\in V_{\pi_2}$.}
\end{equation}
For a function $f\colon \GL_{n_1+n_2}(k)\to V_{\pi_1\times\pi_2}$, we denote by $\overline f\colon \GL_{n_1+n_2}(k)\to V_{\pi_2\times\pi_1}$ the function defined by
\[\overline f(g):=\sw_{\pi_1,\pi_2}(f(g)),\quad \text{for all $g\in \GL_{n_1+n_2}(k)$.}\]
We write $\widehat w_{n_1,n_2}:=\left(\begin{smallmatrix}0&\rI_{n_2}\cr \rI_{n_1}& 0\end{smallmatrix}\right)$.

We consider the intertwining operator $\cA_{\pi_1,\pi_2}\colon\rI(\pi_1,\pi_2)\to\rI(\pi_2,\pi_1)$ defined by
\begin{equation} \label{eqn:intertwU}
\cA_{\pi_1,\pi_2}(f)\colon g\mapsto\sum_{u\in U(n_1,n_2)}\overline f(\widehat w_{n_1,n_2}ug).
\end{equation}

Let $\pi_1$ and $\pi_2$ be representations of Whitaker type of $\GL_{n_1}(k)$ and $\GL_{n_2}(k)$, respectively. 
By \cite[Theorem~5.5]{SiZi-Steinberg} the parabolically induced representation $\rI(\pi_1,\pi_2)$ is also of Whitaker type. 

Let $v_{\pi_i,\psi}$ be a nonzero Whittaker vector for $\pi_i$, for $i=1,2$.
We set $n_{1,2}:=n_1+n_2$ and 
\[v_{\pi_1,\pi_2,\psi}:=\begin{cases}
\psi_{n_{1,2}}(u)(\widetilde{\pi_1\otimes\pi_2})(p)\,v_{\pi_1,\psi}\otimes v_{\pi_2,\psi}&\text{if $g=p \widehat w_{n_1,n_2}u$, $p\in P_{n_1,n_2}$, $u\in U_{n_{1,2}}$}\cr
0&\text{otherwise.}
\end{cases}\]
Then, $v_{\pi_1,\pi_2,\psi}$ is a non-zero Whittaker vector of $\rI(\pi_1,\pi_2)$.

\smallskip

Following \cite[Definition~3.1]{SoZe}, we define the  \textit{Shahidi gamma factor} of $\pi_1$ and $\pi_2$ with respect to $\psi$ to be the unique complex number $\overline\gamma^\Sh(\pi_1\times\pi_2,\psi)$ satisfying
\begin{equation} \label{eqn:gamma}
\cA_{\pi_1,\pi_2}(v_{\pi_1,\pi_2,\psi})=\overline\gamma^\Sh(\pi_1\times\pi_2,\psi)\cdot v_{\pi_2,\pi_1,\psi}.
\end{equation}

By \cite[\S4.A]{SoZe}, the \textit{normalized Shahidi gamma factor} is 
\begin{equation} \label{eqn:Shgamma}
\gamma^\Sh(\pi_1\times\pi_2,\psi):=q^{-n_1n_2/2}\cdot\overline\gamma^\Sh(\pi_1\times\pi_2,\psi).
\end{equation}
The relation of normalized Shahidi gamma factors and Jacquet-Piatetski-Shapiro-Shalika factors is the following (see \cite[Corollary~3.15]{SoZe}):
\begin{equation} \label{eqn:S-JPSS}
\gamma^\Sh(\pi_1\times\pi_2,\psi)=q^{(n_1-n_2-1)n_2}\cdot\omega_{\pi_2}(-1)\cdot\gamma(\pi_1\times\pi_2^\vee,\psi)
\end{equation}
in either of the following cases
\begin{enumerate}
\item $n_1>n_2$,
\item
$n_1=n_2$ and $\pi_2$ is cuspidal.
\end{enumerate}

\begin{defn} \label{defn:eps-for-pairs} {\rm
If $\sigma_1\in\Irr(\GL_{n_1}(k))$ and $\sigma_2\in\Irr(\GL_{n_2}(k))$ are cuspidal,  we define the $\epsilon$-factor for pairs $\epsilon(\sigma_1\times\sigma_2,\psi)$ by 
\begin{equation} 
\epsilon(\sigma_1\times\sigma_2,\psi):=\omega_{\sigma_2}(-1)^{n_2}\cdot\gamma^\Sh(\sigma_1\times\sigma_2^\vee,\psi).
\end{equation}}
\end{defn}

\section{More properties of the Macdonald correspondence} \label{sec:properties}
\subsection{Compatibility with the depth-zero local Langlands correspondence}
Let $\Irr(\GL_n(F))_0$ denote the set of isomorphism classes of depth-zero irreducible representations of $\GL_n(F)$, and $\fR(\WD_F^{\tame})_{n}$ the category of $n$-dimensional tamely ramified representations of  the Weil-Deligne group $\WD_F$.
Silberger and Zink have established in \cite[Appendix]{SiZi2} the commutativity of the following diagram:
\begin{equation}  \label{Diagram-SZ}
\begin{tikzcd}
{\Irr(\GL_n(F))_0} \arrow[]{d}[swap]{p_0}\arrow[]{r}{\LLC_n}&\fR(\WD_F^{\tame})_{n}\arrow[]{d}{{}^Lp_0}\\
\Irr(\GL_n(k))\arrow[]{r}[swap]{\cM_n} & \fR(\WD_F^{\tame})_{I_F,n}
\end{tikzcd},
\end{equation}
where $p_0$ is the map sending an irreducible depth-zero supercuspidal representation of $\GL_n(F)$ to the irreducible cuspidal representation from which it was constructed, and ${}^Lp_0$ is the canonical projection.
The Macdonald correspondence is also compatible with the Shintani descent (see \cite[Theorem~5.1]{SiZi2}).

\subsection{Preservation of the \texorpdfstring{$\epsilon$}{epsilon}-factors for pairs}
In \cite{YZ2},  Ye and Zelingher defined epsilon factors for  pairs for irreducible representations of finite general linear groups using the Macdonald correspondence. Following \cite[\S3]{YZ2},  for $\pi_1\in\Irr(\GL_{n_1}(k))$ 
and $\pi_2\in\Irr(\GL_{n_2}(k))$  we set
\begin{equation}
\epsilon_0(\pi_1\times\pi_2,\psi):=\epsilon_0(\cM_{n_1}(\pi_1)\otimes\cM_{n_2}(\pi_2),\psi).
\end{equation}

\begin{thm} \label{thm:gammas} Suppose $n_1\ge n_2$.
Let $\sigma_1$ and $\sigma_2$ be irreducible cuspidal representations of $\GL_{n_1}(k)$ and $\GL_{n_2}(k)$, respectively. 
Then the following equality holds
\[\epsilon(\sigma_1\times\sigma_2,\psi)=\epsilon_0(\cM_{n_1}(\sigma_1)\otimes\cM_{n_2}(\sigma_2),\psi).\]
\end{thm}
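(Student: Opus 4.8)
The plan is to reduce the statement to the Rankin--Selberg identity for pairs on the Weil--Deligne side together with the finite-field analogue recorded in Section~3, and then to track constants. First I would unwind the two definitions: by Definition~\ref{defn:eps-for-pairs},
\[
\epsilon(\sigma_1\times\sigma_2,\psi)=\omega_{\sigma_2}(-1)^{n_2}\cdot\gamma^\Sh(\sigma_1\times\sigma_2^\vee,\psi),
\]
and by \eqref{eqn:S-JPSS}, which applies in case (1) when $n_1>n_2$ and in case (2) when $n_1=n_2$ since $\sigma_2^\vee$ is cuspidal, one has
\[
\gamma^\Sh(\sigma_1\times\sigma_2^\vee,\psi)=q^{(n_1-n_2-1)n_2}\cdot\omega_{\sigma_2^\vee}(-1)\cdot\gamma(\sigma_1\times\sigma_2,\psi).
\]
Substituting and simplifying the central-character factors (using $\omega_{\sigma_2^\vee}=\omega_{\sigma_2}^{-1}$ and $\omega_{\sigma_2}(-1)=\pm1$) expresses $\epsilon(\sigma_1\times\sigma_2,\psi)$ as an explicit power of $q$ times a sign times the Jacquet--Piatetski-Shapiro--Shalika $\gamma$-factor $\gamma(\sigma_1\times\sigma_2,\psi)$.

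Next I would invoke the known compatibility of the finite-field $\gamma$-factor for pairs of cuspidal representations with the Galois-side $\gamma$-factor. Concretely, using the Macdonald correspondence $\cM_{n_i}^\cusp$ of \eqref{eqn:MC-cusp}, which sends $\sigma_i$ to the inertial class of an irreducible tamely ramified representation $\varphi_i$ of $\WD_F$, one has the matching of Deligne's $\gamma$-factor $\gamma(\varphi_1\otimes\varphi_2,\Psi)$ with $\gamma(\sigma_1\times\sigma_2,\psi)$ up to the normalization discrepancy between the finite and local settings; this is the pairs-analogue of \eqref{eqn=ee0} and is exactly what is extracted from Nien's and Soudry--Zelingher's results together with the Deligne--Henniart formalism. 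From there the passage from $\gamma$ to $\epsilon$ on the Galois side is the standard relation $\gamma(\varphi_1\otimes\varphi_2,\Psi)=\epsilon_0(\varphi_1\otimes\varphi_2,\Psi)\cdot L(\varphi_2^\vee\otimes\varphi_1^\vee,\cdot)/L(\varphi_1\otimes\varphi_2,\cdot)$, and for irreducible tamely ramified $\varphi_i$ of coprime-to-trivial type the $L$-factors are trivial in the relevant range $n_1\ge n_2$ (the cuspidal inertial classes are "primitive", so no unramified twist subrepresentation occurs), hence $\gamma_0=\epsilon_0$ there. Combining, $\epsilon_0(\cM_{n_1}(\sigma_1)\otimes\cM_{n_2}(\sigma_2),\psi)$ equals the same explicit $q$-power-times-sign multiple of $\gamma(\sigma_1\times\sigma_2,\psi)$.

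The remaining work is purely bookkeeping: one must check that the power of $q$ and the sign produced on the finite side in the first paragraph agree precisely with those produced by Deligne's inductivity and normalization conventions on the Galois side (in particular the factor $q^{-n_1n_2/2}$ built into $\gamma^\Sh$ in \eqref{eqn:Shgamma}, the measure normalization giving $\fp_F$ volume $q^{-1/2}$, and the shift $q^{(n_1-n_2-1)n_2}$ in \eqref{eqn:S-JPSS}). I would organize this as a short lemma computing $\epsilon_0(\varphi_1\otimes\varphi_2,\Psi)$ explicitly for tamely ramified $\varphi_i$ attached to regular characters via \eqref{eqn:bij}, using \cite[\S5.16]{Del} and the Hasse--Davenport--type relation for Gauss sums, and then matching it term by term against Soudry--Zelingher's computation of $\gamma^\Sh$ for cuspidal pairs.

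The main obstacle will be exactly this constant-matching step. Each side carries its own normalization --- the finite side through the Fourier transform $q^{-n^2/2}$, the Godement--Jacquet-type $w(\pi,\psi)$, and the $\gamma^\Sh$-to-$\gamma$ dictionary \eqref{eqn:S-JPSS}, the Galois side through Deligne's $\epsilon_0$ with the chosen additive character $\Psi$ and Haar measure --- and these normalizations were fixed independently by different authors. Verifying that no stray factor of $q^{1/2}$, sign, or $\omega(-1)$ survives requires careful tracking through \eqref{eqn:epsilon}, \eqref{eqn:Shgamma}, \eqref{eqn:S-JPSS}, and Deligne's $\epsilon_0$ formula; I expect this to be where essentially all the content of the proof lies, the structural reduction to the $\gamma$-factor identity being comparatively formal once \eqref{eqn=ee0} and its pairs-analogue are in hand.
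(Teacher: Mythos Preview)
Your reduction in the first paragraph is exactly what the paper does: unwind Definition~\ref{defn:eps-for-pairs} and apply \eqref{eqn:S-JPSS}. After that, however, the paper takes a much shorter path. Rather than passing through a Galois-side $\gamma$-factor, an $L$-factor argument, and a from-scratch Gauss-sum computation, the paper simply cites \cite[Theorem~4.4]{YZ2} for $n_1>n_2$ and \cite[Theorem~2.18]{Zeli} for $n_1=n_2$: these results already give the identity
\[
\gamma^\Sh(\sigma_1\times\sigma_2^\vee,\psi)=\omega_{\sigma_2}(-1)^{n_2}\cdot\epsilon_0(\sigma_1\times\sigma_2,\psi),
\]
with all constants matched, so the theorem follows in one line from the definition. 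In effect, the Hasse--Davenport and Deligne \S5.16 computation you sketch as the ``main obstacle'' is precisely the content of those two cited theorems of Ye--Zelingher and Zelingher; you are proposing to reprove them. Your route would work, but it is substantially longer, and the intermediate step through a local Galois $\gamma$-factor with $L$-quotients is somewhat misplaced here (we are working with inertial equivalence classes, where Macdonald already showed $\epsilon_0$ depends only on the restriction to $I_F$, so there is no need to introduce $L$-factors at all; also your claim that the $L$-factors are automatically trivial fails e.g.\ when $n_1=n_2$ and $\sigma_2\cong\sigma_1^\vee$). The cleaner organization is: cite \cite{YZ2} and \cite{Zeli} for the $\gamma\leftrightarrow\epsilon_0$ matching, then combine with \eqref{eqn:S-JPSS} and Definition~\ref{defn:eps-for-pairs}.
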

\begin{proof}
By combining \eqref{eqn:S-JPSS} with \cite[Theorem~4.4]{YZ2} (for the case $n_1>n_2$) and \cite[Theorem~2.18]{Zeli} (for the case $n_1=n_2$), we obtain
\[\gamma^\Sh(\sigma_1\times\sigma_2^\vee,\psi)=\omega_{\sigma_2}(-1)^{n_2}\cdot\epsilon_0(\sigma_1\times\sigma_2,\psi).\]
The result follows.
\end{proof}

\section{Characterization of the Macdonald correspondence} \label{subsec:Macdo-charac}
We define 
\begin{eqnarray}
\Irr(\WD_F^{\tame})_{I_F,n}&\longrightarrow&\Irr_\cusp(\GL_n(k))\cr
\rho&\mapsto& \pi_\rho 
\end{eqnarray} 
to be the inverse map of $\cM_n^\cusp$.

We recall the following result, that was was obtained by Nien in 2014:
\begin{thmconverse} {\rm \cite[Theorem~3.9]{Nie-2014} }
Let $n$ be an integer $\ge 2$, and let $\pi$ and $\pi'$ be two  irreducible cuspidal representations of $\GL_{n}(k)$ with the same central character, such that,
for every integer $m$ such that $1\le m\le n/2$, and every  irreducible generic representation $\tau$ of $\GL_{m}(k)$, we have the equality
\begin{equation} 
\gamma(\pi\times\tau,\psi)=\gamma(\pi'\times\tau,\psi).
\end{equation}
Then the representations $\pi$ and $\pi'$ are equivalent.
\end{thmconverse}

\begin{cor} \label{cor:cuspLC}
Let $n$ be an integer such $n\ge 2$, and let $\pi$ be an irreducible cuspidal representation of $\GL_n(k)$ and $\rho$ be an irreducible $n$-dimensional representation of $W_F$ such that $\pi_\rho$ and $\pi$ have same central character and
\[\epsilon(\pi\times \pi_{\rho'},\psi)=\epsilon(\rho\otimes \rho',\psi),\]
for any integer $m$ such that $1\le m\le n/2$ and every  irreducible $n$-dimensional representation $\rho'$ of $W_F$. 

Then the representation $\pi$ is equivalent to $\pi_\rho$.
\end{cor}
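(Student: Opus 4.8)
The plan is to deduce Corollary~\ref{cor:cuspLC} from the Converse Theorem of Nien together with Theorem~\ref{thm:gammas} and the definitions of the various $\epsilon$-factors. First I would translate the hypothesis, which is phrased in terms of $\epsilon$-factors, into a statement about $\gamma$-factors for pairs on the $\GL_n(k)$-side, so that Nien's theorem applies. Concretely, set $\pi' := \pi_\rho$, the irreducible cuspidal representation of $\GL_n(k)$ attached to $\rho$ via the inverse of $\cM_n^\cusp$; by hypothesis $\pi$ and $\pi'$ have the same central character. Given an integer $m$ with $1 \le m \le n/2$ and an irreducible generic (hence, since $m \le n/2 < n$ forces nothing, but in any case for the purposes of the converse theorem one may reduce to cuspidal $\tau$ by multiplicativity, or invoke Nien's theorem in the generic formulation directly) representation $\tau$ of $\GL_m(k)$, I want to compare $\gamma(\pi\times\tau,\psi)$ with $\gamma(\pi'\times\tau,\psi)$.

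The key computation is the following chain of equalities. On one hand, by Definition~\ref{defn:eps-for-pairs} and the relation \eqref{eqn:S-JPSS} between the normalized Shahidi $\gamma$-factor and the Jacquet--Piatetski-Shapiro--Shalika $\gamma$-factor (valid here because $n > m$, so $n_1 > n_2$ in the notation there), the $\epsilon$-factor for the pair $(\pi,\tau^\vee)$, or more precisely $\epsilon(\pi\times\tau,\psi)$ when $\tau$ is cuspidal, is an explicit power of $q$ times a root-of-unity factor times $\gamma(\pi\times\tau^\vee,\psi)$; all the prefactors depend only on $n$, $m$, $\psi$ and the central character $\omega_\tau$, not on $\pi$. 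On the other hand, by Theorem~\ref{thm:gammas} (applied with $\sigma_1 = \pi$, $n_1 = n$, $\sigma_2 = \tau$, $n_2 = m$, so that the hypothesis $n_1 \ge n_2$ holds), we have $\epsilon(\pi\times\tau,\psi) = \epsilon_0(\cM_n(\pi)\otimes\cM_m(\tau),\psi)$, and the right-hand side, being a Deligne $\epsilon_0$-factor of a tensor product of Weil--Deligne representations, is by \eqref{eqn=ee0}-type compatibility and the hypothesis $\epsilon(\pi\times\pi_{\rho'},\psi) = \epsilon(\rho\otimes\rho',\psi)$ equal to the corresponding quantity built from $\rho$. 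Since $\cM_n(\pi_\rho) = \rho_{I_F}$ by definition of $\pi_\rho$ (the inverse of $\cM_n^\cusp$), the same computation with $\pi$ replaced by $\pi' = \pi_\rho$ yields $\epsilon(\pi'\times\tau,\psi) = \epsilon_0(\rho\otimes\cM_m(\tau),\psi)$ as well. Comparing, $\epsilon(\pi\times\tau,\psi) = \epsilon(\pi'\times\tau,\psi)$ for every $\tau$ in the relevant range, and then unwinding the $q$-power prefactors (which cancel, being independent of $\pi$) gives $\gamma(\pi\times\tau^\vee,\psi) = \gamma(\pi'\times\tau^\vee,\psi)$. As $\tau$ ranges over all irreducible cuspidal (resp.\ generic) representations of $\GL_m(k)$, so does $\tau^\vee$, so Nien's Converse Theorem applies and forces $\pi \simeq \pi' = \pi_\rho$.

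The main obstacle is bookkeeping of the normalizing constants: one must check that every $q$-power and every sign/root-of-unity factor appearing in \eqref{eqn:Shgamma}, \eqref{eqn:S-JPSS}, Definition~\ref{defn:eps-for-pairs}, and in passing between $\epsilon_0$ of a Weil--Deligne representation and the JPSS $\gamma$-factor depends only on $(n, m, \psi)$ and on central characters that $\pi$ and $\pi'$ share, so that the comparison of $\epsilon$-factors is equivalent to the comparison of $\gamma$-factors with no residual dependence on $\pi$. A secondary point requiring care is the hypothesis formulation: the statement quantifies over irreducible $n$-dimensional representations $\rho'$ of $W_F$, but to feed Nien's theorem one needs to know this forces equality of $\gamma$-factors against all irreducible generic $\tau$ of $\GL_m(k)$ for $m \le n/2$; here one uses that $\cM_m^\cusp$ is a bijection onto the inertial classes of $m$-dimensional irreducible tamely ramified Weil--Deligne representations, so $\tau$ cuspidal corresponds to $\rho'$ irreducible, and the reduction from generic to cuspidal $\tau$ is handled by the multiplicativity of $\gamma$-factors for pairs established in \cite{SoZe} (or is already built into Nien's statement). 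Modulo these verifications, the proof is a short formal deduction.
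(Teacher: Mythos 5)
Your proposal is correct and follows exactly the same route as the paper: pass from the $\epsilon$-factor hypothesis to an equality of JPSS $\gamma$-factors for $\pi$ and $\pi_\rho$ against all cuspidal $\tau$ of $\GL_m(k)$ with $m \le n/2$, via Definition~\ref{defn:eps-for-pairs} and \eqref{eqn:S-JPSS}, and then invoke Nien's converse theorem. The paper's proof is a one-line pointer to those two ingredients, whereas you additionally make explicit the role of Theorem~\ref{thm:gammas} (needed to identify $\epsilon(\pi_\rho\times\pi_{\rho'},\psi)$ with the Galois-side $\epsilon_0(\rho\otimes\rho',\psi)$ so that the hypothesis can be transported to a comparison of the two cuspidals $\pi$ and $\pi_\rho$), the bookkeeping of normalizing constants, and the reduction from generic to cuspidal $\tau$ via the multiplicativity of $\gamma$-factors established in \cite{SoZe} — points the paper treats as implicit. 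You also correctly treat the ``$n$-dimensional $\rho'$'' in the statement as the typo it is (it should be $m$-dimensional), which is the only reading making the hypothesis meaningful.
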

\begin{proof} It follows from the above Converse Theorem by using  Definition~\ref{defn:eps-for-pairs} and \eqref{eqn:S-JPSS}.
\end{proof}

\begin{thm} \label{thm:cuspLC} Let $(\cT_n)_{n\ge 1}$ be a collection of bijections 
\begin{eqnarray*} \cT_n\colon&\Irr(\WD_F^{\tame})_{I_F,n}\longrightarrow\Irr_\cusp(\GL_n(k))\\
&\rho\mapsto \tau_\rho
\end{eqnarray*}
satisfying the following properties:
\begin{enumerate}
\item[{\rm(1)}] for every integer $n\ge 1$, and every $\rho\in\Irr(\WD_F^{\tame})_{I_F,n}$, 
\[\omega_{\tau_\rho}=\det(\rho),\]
\item[{\rm(2)}]  for all integers $n_1,n_2\ge 1$, and every $\rho_1\in\Irr(\WD_F^{\tame})_{I_F,n_1}$ and every  $\rho_2\in\Irr(\WD_F^{\tame})_{I_F,n_2}$, 
\[\epsilon(\tau_{\rho_1}\times \tau_{\rho_2},\psi)=\epsilon(\rho_1\otimes\rho_2,\psi).\]
\end{enumerate}
Then $\cT_n=\cM_n^\cusp$.
\end{thm}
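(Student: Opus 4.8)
The plan is to prove the statement by strong induction on $n$, at each step invoking Corollary~\ref{cor:cuspLC} — and thereby the Converse Theorem of Nien recalled above — to pin down $\tau_\rho$. Note first that, since $\rho\mapsto\pi_\rho$ was defined to be the inverse of $\cM_n^\cusp$, the assertion $\cT_n=\cM_n^\cusp$ is equivalent to the statement that $\tau_\rho=\pi_\rho$ for every $\rho\in\Irr(\WD_F^{\tame})_{I_F,n}$; and since $\tau_\rho$ and $\pi_\rho$ both depend only on the inertial class of $\rho$, it is enough to fix, for each such class, an irreducible tamely ramified $n$-dimensional representation $\rho$ of $W_F$ representing it and to check $\tau_\rho=\pi_\rho$ for that representative.

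\textbf{Base case $n=1$.} Here $\GL_1(k)=k^\times$, every irreducible representation is cuspidal and coincides with its own central character, and $\Irr(\WD_F^{\tame})_{I_F,1}$ likewise consists of characters. Hypothesis~(1) gives $\tau_\rho=\omega_{\tau_\rho}=\det(\rho)$, while \eqref{eqn:observation} applied to $\pi_\rho$ gives $\pi_\rho=\omega_{\pi_\rho}=\det(\cM_1(\pi_\rho))=\det(\rho)$ (using $\cM_1(\pi_\rho)=\rho$, and viewing the determinant character on $k^\times$ in both cases as in \eqref{eqn:observation}). Hence $\tau_\rho=\pi_\rho$, so $\cT_1=\cM_1^\cusp$.

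\textbf{Inductive step.} Let $n\ge 2$ and assume $\tau_{\rho'}=\pi_{\rho'}$ for every $\rho'\in\Irr(\WD_F^{\tame})_{I_F,m}$ with $m<n$. Fix $\rho\in\Irr(\WD_F^{\tame})_{I_F,n}$, choose a representative as above, and set $\pi:=\tau_\rho\in\Irr_\cusp(\GL_n(k))$. First, hypothesis~(1) gives $\omega_\pi=\det(\rho)$, and \eqref{eqn:observation} gives $\omega_{\pi_\rho}=\det(\cM_n(\pi_\rho))=\det(\rho)$, so $\pi$ and $\pi_\rho$ have the same central character. Next, let $m$ be an integer with $1\le m\le n/2$ and let $\rho'$ be an irreducible $m$-dimensional tamely ramified representation of $W_F$. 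By hypothesis~(2),
\[\epsilon(\tau_\rho\times\tau_{\rho'},\psi)=\epsilon(\rho\otimes\rho',\psi),\]
and since $m\le n/2<n$ the inductive hypothesis gives $\tau_{\rho'}=\pi_{\rho'}$, whence
\[\epsilon(\pi\times\pi_{\rho'},\psi)=\epsilon(\rho\otimes\rho',\psi).\]
Thus $\pi$ and $\rho$ satisfy all the hypotheses of Corollary~\ref{cor:cuspLC}, which yields $\pi=\pi_\rho$, i.e.\ $\tau_\rho=\pi_\rho$. As the class of $\rho$ was arbitrary, $\cT_n=\cM_n^\cusp$, and the induction is complete.

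\textbf{Remarks on the argument.} The substance is entirely in the two inputs: Nien's Converse Theorem and its translation across the Macdonald correspondence, Corollary~\ref{cor:cuspLC}, which itself rests on Definition~\ref{defn:eps-for-pairs} and the identity \eqref{eqn:S-JPSS}. The single point that makes the induction close is that Nien's theorem requires control of $\gamma$-factors only for twists by generic representations of $\GL_m(k)$ with $1\le m\le n/2$ — all of dimension strictly below $n$ — which is precisely where the inductive hypothesis is available; so there is no genuine obstacle beyond verifying that the hypotheses of Corollary~\ref{cor:cuspLC} are met. One should also note the minor bookkeeping that in the statement of Corollary~\ref{cor:cuspLC} the phrase ``$n$-dimensional representation $\rho'$'' is to be read as ``$m$-dimensional'', and that $\epsilon(\rho\otimes\rho',\psi)$, like $\pi_{\rho'}$ and $\tau_{\rho'}$, depends only on the inertial classes of $\rho$ and $\rho'$ — for the Deligne factor by Macdonald's computation of the tamely ramified $\epsilon_0$ recalled in \cite{Mac} — so that the choice of representatives above is immaterial.
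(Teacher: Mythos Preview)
Your proof is correct and follows the same route as the paper's own argument: both reduce to Corollary~\ref{cor:cuspLC} (hence to Nien's Converse Theorem) after checking that $(\cM_n^\cusp)^{-1}$ itself satisfies properties~(1) and~(2) via \eqref{eqn:observation} and Theorem~\ref{thm:gammas}. The one substantive difference is that you run an explicit strong induction on $n$, whereas the paper invokes Corollary~\ref{cor:cuspLC} directly from \eqref{eqn:centraux} and \eqref{eqn:epsilons-equal}. Your version is in fact the more careful of the two: the identity \eqref{eqn:epsilons-equal} reads $\epsilon(\pi_{\rho_1}\times\pi_{\rho_2},\psi)=\epsilon(\tau_{\rho_1}\times\tau_{\rho_2},\psi)$, but Corollary~\ref{cor:cuspLC} requires $\epsilon(\tau_\rho\times\pi_{\rho'},\psi)=\epsilon(\rho\otimes\rho',\psi)$ with the \emph{Macdonald} image $\pi_{\rho'}$ in the second slot, and one only knows $\tau_{\rho'}=\pi_{\rho'}$ for $m<n$ once the inductive hypothesis is in place. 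Your base case $n=1$ and the observation that the twisting dimensions $m\le n/2$ are strictly below $n$ are exactly what make this close; the paper's proof is best read as leaving this induction implicit.
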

\begin{proof}
By \eqref{eqn:observation}, the collection of bijections $((\cM_n^\cusp)^{-1})_{n\ge 1}$ satisfies  the property (1). As above, we write $\pi_\rho:=\cM_n^{-1}(\rho)$. 
Thus, we have, for every integer $n\ge 1$, and every every $\rho\in\Irr(\WD_F^{\tame})_{I_F,n}$, 
\[\omega_{\pi_\rho}=\det(\rho).\] 
It follows that the representations $\pi_\rho$ and $\tau_\rho$ have the same central characters:
\begin{equation} \label{eqn:centraux}
\omega_{\pi_\rho}=\omega_{\tau_\rho}.
\end{equation}
By Theorem~\ref{thm:gammas}, the collection of bijections $(\cM_n^{-1})_{n\ge 1}$ satisfy the property (2). Thus, we have, for all integers $n_1,n_2\ge 1$, 
and every $\rho_1\in\Irr(\WD_F^{\tame})_{I_F,n_1}$ and every  $\rho_2\in\Irr(\WD_F^{\tame})_{I_F,n_2}$,
\[\epsilon(\pi_{\rho_1}\times\pi_{\rho_2},\psi)=\epsilon(\rho_1\otimes\rho_2,\psi).\]
Hence, we get
\begin{equation} \label{eqn:epsilons-equal}
\epsilon(\pi_{\rho_1}\times\pi_{\rho_2},\psi)=\epsilon(\tau_{\rho_1}\times\tau_{\rho_2},\psi).
\end{equation}
Equations \eqref{eqn:centraux} and \eqref{eqn:epsilons-equal} allow us to apply Corollary~\ref{cor:cuspLC}. We obtain that 
\[\tau_\rho=\pi_\rho,\quad\text{for every integer $n\ge 1$, and every $\rho\in\Irr(\WD_F^{\tame})_{I_F,n}$.}\]
\end{proof}

\end{document}